\documentclass[11pt]{amsart}
\usepackage[margin=30mm]{geometry}
\usepackage{amsmath,amssymb}
\usepackage{amsthm}
\usepackage{mathrsfs}

\newtheorem{thm}{Theorem}

\newtheorem{lem}{Lemma}
\newtheorem{cor}{Corollary}
\newtheorem{defi}{Definition}

\newtheorem{rem}{Remark}

\newtheorem{theorem}{Theorem}[section]
\newtheorem{lemma}{Lemma}[section]

\usepackage{enumitem}

\begin{document}

\title{A note on Bohr chaos and hyperbolic sets}
\author{Noriaki Kawaguchi}
\subjclass[2020]{37B05, 37B65, 37D05, 37D45}
\keywords{shadowing, expansiveness, Bohr chaos, hyperbolic sets, chain components}
\address{Department of Mathematical and Computing Science, School of Computing, Institute of Science Tokyo, 2-12-1 Ookayama, Meguro-ku, Tokyo 152-8552, Japan}
\email{gknoriaki@gmail.com}

\begin{abstract}
This paper studies the relationship between shadowing phenomena and Bohr chaos in dynamical systems. We provide sufficient conditions for Bohr chaos in terms of shadowing. By combining those conditions with the shadowing lemma, we obtain some results on Bohr chaos and hyperbolic sets. Our results also highlight some simple but non-trivial structural properties of hyperbolic sets.
\end{abstract}

\maketitle

\markboth{NORIAKI KAWAGUCHI}{A note on Bohr chaos and hyperbolic sets}

{\em Shadowing} is an important notion in the qualitative theory of dynamical systems (see \cite{AH,P} for background). It describes the phenomenon in which coarse orbits, or {\em pseudo-orbits}, can be closely approximated by true orbits. The classical shadowing lemma asserts that a $C^1$-diffeomorphism of a closed differentiable manifold exhibits shadowing on a neighborhood of its hyperbolic set.  As a feature of hyperbolic dynamics, shadowing is intimately related to the intricate behaviors collectively called as `chaos'.

{\em Chaos} has long been a central subject in the theory of dynamical systems and numerous definitions of chaos have been proposed to capture various aspects of complex dynamical behavior. {\em Bohr chaos} is the (relatively recent) notion introduced in \cite{FFRS,FSV}. In this paper, we provide sufficient conditions for Bohr chaos in terms of shadowing. Moreover, by combining those conditions with the shadowing lemma, we obtain some results concerning Bohr chaos and hyperbolic sets. Our results also highlight some simple but non-trivial structural properties of hyperbolic sets that play a role in the emergence of chaotic behavior.

We begin by recalling the definition of Bohr chaos from \cite{FFRS,FSV}. Throughout, $X$ denotes a compact metric space endowed with a metric $d\colon X\times X\to[0,\infty)$. We denote by $C(X)$ the set of continuous real-valued functions on $X$. A sequence $(a_i)_{i\ge0}$ of real numbers is said to be {\em bounded} if
\[
\sup_{i\ge0}|a_i|<\infty.
\]
For a continuous map $f\colon X\to X$, we say that a bounded sequence $(a_i)_{i\ge0}$ of real numbers is {\em orthogonal} to $(X,f)$ if
\[
\lim_{n\to\infty}\frac{1}{n}\sum_{i=0}^{n-1}a_i\phi(f^i(x))=0
\]
for all $x\in X$ and all $\phi\in C(X)$. A continuous map $f\colon X\to X$ is said to be {\em Bohr chaotic} if every bounded sequence $(a_i)_{i\ge0}$ of real numbers with
\[
\limsup_{n\to\infty}\frac{1}{n}\sum_{i=0}^{n-1}|a_i|>0
\]
is not orthogonal to $(X,f)$, i.e., satisfies
\[
\limsup_{n\to\infty}\frac{1}{n}\left|\sum_{i=0}^{n-1}a_i\phi(f^i(x))\right|>0.
\]
for some $x\in X$ and some $\phi\in C(X)$.

\begin{rem}
\normalfont
Let $f\colon X\to X$ be a continuous map and let $h_{\rm top}(f)$ denote the topological entropy of $f$. 
\begin{itemize}
\item For a continuous self-map $g\colon Y\to Y$ of a compact metric space $Y$,
\begin{itemize}
\item if $f$ is Bohr chaotic and there is a injective continuous map $h\colon X\to Y$ such that $h\circ f=g\circ h$, then $g$ is Bohr chaotic,
\item if $g$ is Bohr chaotic and there is a surjective continuous map $h\colon X\to Y$  such that $h\circ f=g\circ h$, then $f$ is Bohr chaotic.
\end{itemize}
\item If $f$ is Bohr chaotic, then $h_{\rm top}(f)>0$ (see Introduction of \cite{FFRS}).
\item If $f$ is uniquely ergodic, then $f$ is not Bohr chaotic; therefore, $h_{\rm top}(f)>0$ does not necessarily imply that $f$ is Bohr chaotic (see \cite[Theorem 1.3]{FFRS}).
\item If there are $m\ge1$ and a closed subset $Y$ of $X$ with $f^m(Y)\subset Y$ for which there is a homeomorphism $h\colon Y\to\{0,1\}^\mathbb{N}$ (or $h\colon Y\to\{0,1\}^\mathbb{Z}$) such that
\[
h\circ f^m|_Y=\sigma\circ h,
\]
where $\sigma$ is the shift map, then $f$ is Bohr chaotic (see \cite[Theorem 1.1]{FFRS}). In \cite[Question 1]{FFRS}, it is asked if the injectivity assumption of $h$ can be removed. This question is resolved in \cite{HLT} (see \cite[Theorem A]{HLT}).
\item If $f$ admits the specification, then $f$ is Bohr chaotic (see \cite[Theorem 4.2.1]{T}).
\end{itemize}
\end{rem}

We shall present simple sufficient conditions for Bohr chaos in terms of shadowing. Let us first introduce some notations and definitions.

Given a homeomorphism $f\colon X\to X$ and $x\in X$, we denote by $\mathcal{O}_f(x)$ the orbit of $x$ for $f$:
\[
\mathcal{O}_f(x)=\{f^i(x)\colon i\in\mathbb{Z}\}.
\]
For a closed subset $S$ of $X$ with $f(S)=S$, we define subsets $W^u(S),W^s(S)$ of $X$ by
\[
W^u(S)=\{x\in X\colon\lim_{i\to-\infty}d(f^i(x),S)=0\},
\]
\[
W^s(S)=\{x\in X\colon\lim_{i\to+\infty}d(f^i(x),S)=0\}.
\]

The definition of shadowing on a subset is as below.

\begin{defi}
\normalfont
Let $f\colon X\to X$ be a homeomorphism and let $\xi=(x_i)_{i\in\mathbb{Z}}$ be a sequence of points in $X$. For $\delta>0$, $\xi$ is said to be a {\em $\delta$-pseudo orbit} of $f$ if
\[
\sup_{i\in\mathbb{Z}}d(f(x_i),x_{i+1})\le\delta.
\]
For $x\in X$ and $\epsilon>0$, $\xi$ is said to be {\em $\epsilon$-shadowed} by $x$ if
\[
\sup_{i\in\mathbb{Z}}d(x_i,f^i(x))\le\epsilon.
\]
For a subset $S$ of $X$, we say that $f$ has {\em shadowing} on $S$ if for any $\epsilon>0$, there is $\delta>0$ such that every $\delta$-pseudo orbit $(x_i)_{i\in\mathbb{Z}}$ of $f$ with $x_i\in S$ for all $i\in\mathbb{Z}$ is $\epsilon$-shadowed by some $x\in X$. 
\end{defi}

Given a homeomorphism $f\colon X\to X$ and $\delta>0$, a finite sequence $(x_i)_{i=0}^k$ of points in $X$, where $k\ge1$, is said to be a {\em $\delta$-chain} of $f$ if
\[
\sup_{0\le i\le k-1}d(f(x_i),x_{i+1})\le\delta.
\]
We say that $f$ is {\em chain transitive} if for any $x,y\in X$ and $\delta>0$, there is a $\delta$-chain $(x_i)_{i=0}^k$ of $f$ such that $x_0=x$ and $x_k=y$. 

The first result of this paper is the following theorem.

\begin{thm}
Given a homeomorphism $f\colon X\to X$, if there are $x,y\in X$ with $\mathcal{O}_f(x)\ne\mathcal{O}_f(y)$ and a closed subset $S$ of $X$ with $f(S)=S$ such that
\begin{itemize}
\item[(1)] $\{x,y\}\subset[W^u(S)\cap W^s(S)]\setminus S$,
\item[(2)] $\liminf_{i\to-\infty}d(f^i(x),f^i(y))=\liminf_{i\to+\infty}d(f^i(x),f^i(y))=0$,
\item[(3)] $f|_S\colon S\to S$ is chain transitive,
\item[(4)] $f$ has shadowing on $S\cup\mathcal{O}_f(x)\cup\mathcal{O}_f(y)$,
\end{itemize}
then $f$ is Bohr chaotic.
\end{thm}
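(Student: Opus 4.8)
The plan is to build inside $(X,f)$ a two-letter symbolic subsystem by concatenating ``excursions'' of $x$ and of $y$, reading off a letter $0$ or $1$ according to which homoclinic point is being followed, and then to invoke the full-shift criterion recorded in the Remark. Conditions (1) and (2) supply the building blocks: since $x,y\in W^u(S)\cap W^s(S)$, the distances $d(f^i(x),S)$ and $d(f^i(y),S)$ tend to $0$ as $i\to\pm\infty$, while condition (2) guarantees that the two orbits come arbitrarily close to each other both in the far past and in the far future. I would fix $\epsilon>0$ with $\epsilon<\tfrac13 d(x,y)$ and let $\delta>0$ be the associated shadowing constant on $S\cup\mathcal{O}_f(x)\cup\mathcal{O}_f(y)$ from condition (4). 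Using (1) and (2), I would then choose a \emph{single} pair $p\ll 0\ll q$ so that $f^p(x),f^p(y),f^q(x),f^q(y)$ are all within $\delta$ of $S$ and, simultaneously, $d(f^p(x),f^p(y))\le\delta$ and $d(f^q(x),f^q(y))\le\delta$. Setting $B_0=(f^p(x),\dots,f^q(x))$ and $B_1=(f^p(y),\dots,f^q(y))$, these two blocks have equal length and, crucially, their initial points agree up to $\delta$ and their terminal points agree up to $\delta$.

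The key step is to produce connecting segments of one fixed length. Because the terminal points of $B_0$ and $B_1$ are $\delta$-close to each other and to $S$, they are close to a common $b\in S$; likewise the initial points are close to a common $a\in S$. Hence I need only \emph{one} connector: by chain transitivity of $f|_S$ (condition (3)) I would fix a single $\delta$-chain $C=(c_0,\dots,c_L)$ in $S$ from $b$ to $a$. For $\omega=(\omega_k)_{k\ge0}\in\{0,1\}^{\mathbb N}$ I then form the bi-infinite sequence
\[
\xi^\omega=\cdots\,C\,B_{\omega_0}\,C\,B_{\omega_1}\,C\,B_{\omega_2}\,C\,\cdots
\]
(filling the negative indices by repeated copies of $C\,B_0$), which is readily checked to be a $\delta$-pseudo orbit lying in $S\cup\mathcal{O}_f(x)\cup\mathcal{O}_f(y)$. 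The whole point of condition (2) is that the same connector works before and after either block, so each period has the common length $m:=(q-p)+L+2$. This is exactly the main obstacle, and where the hypotheses do their work: without condition (2) one would need four connectors (for the transitions $0\to0$, $0\to1$, $1\to0$, $1\to1$) of generally unequal lengths produced by chain transitivity, the block positions would depend on the whole prefix of $\omega$, and the timing would no longer be periodic.

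By condition (4) each $\xi^\omega$ is $\epsilon$-shadowed by some $z_\omega\in X$. I would let $Y$ be the closed set of all points that $\epsilon$-shadow some $\xi^\omega$, and define $\pi\colon Y\to\{0,1\}^{\mathbb N}$ by reading, at the time $t_k$ in block $k$ where $\xi^\omega$ passes through $f^0(\cdot)\in\{x,y\}$, whether $f^{t_k}(z)$ lies within $\epsilon$ of $x$ or of $y$. The choice $\epsilon<\tfrac13 d(x,y)$ makes this reading unambiguous, so $\pi$ is well defined; it is continuous because each coordinate is determined by a single orbit coordinate, it is surjective because every $\xi^\omega$ is shadowed, and it satisfies $f^m(Y)\subseteq Y$ and $\pi\circ f^m|_Y=\sigma\circ\pi$ since applying $f^m$ shifts $\xi^\omega$ by one block. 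Thus the one-sided full shift on $\{0,1\}^{\mathbb N}$ is a continuous equivariant factor of $(Y,f^m)$. Note that I only need a factor map here, not a conjugacy, which lets me bypass any appeal to expansiveness or uniqueness of shadowing orbits.

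Finally I would combine this factor map with the functoriality in the Remark. The full shift on $\{0,1\}^{\mathbb N}$ is itself Bohr chaotic (the full-shift criterion of the Remark, with $Y$ the whole shift and $m=1$); by the surjective-map statement $(Y,f^m)$ is then Bohr chaotic, by the injective-map statement applied to the inclusion $Y\hookrightarrow X$ the system $(X,f^m)$ is Bohr chaotic, and the passage from the power $f^m$ back to $f$ (the same passage already built into the full-shift criterion) yields that $f$ is Bohr chaotic. The delicate points left to verify---that $Y$ is closed and that $\pi$ is a continuous surjective semiconjugacy---are routine once the timing is periodic; the genuinely nontrivial step remains the uniform-length construction of the connectors enabled by condition (2).
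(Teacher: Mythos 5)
Your construction of the equal-length blocks $B_0,B_1$ and of the single connector $C$ (using (1), (2), (3)) is sound, and it parallels the pseudo-orbit construction in the paper's own proof. The genuine gap is in your final reduction. The two functoriality statements in Remark 1 relate systems $(X,f)$ and $(Y,g)$ intertwined by a map $h$ with $h\circ f=g\circ h$; they never change the time scale. So from ``the one-sided full shift is a factor of $(Y,f^m|_Y)$'' you may legitimately conclude that $(Y,f^m|_Y)$, and then $(X,f^m)$, are Bohr chaotic; but the step from $(X,f^m)$ Bohr chaotic to $(X,f)$ Bohr chaotic is not covered by anything you can cite. The full-shift criterion in Remark 1 does perform a passage from $f^m$ to $f$, but only under the hypothesis that $h\colon Y\to\{0,1\}^{\mathbb{N}}$ is a \emph{homeomorphism}, and its proof (Theorem 1.1 of \cite{FFRS}) is exactly the ``horseshoe with disjoint steps'' argument that Remark 3 describes as complicated; it is not available as a standalone lemma ``$f^m$ Bohr chaotic $\Rightarrow$ $f$ Bohr chaotic.'' Indeed, if such a lemma were available, Theorem 1.1 of \cite{FFRS} would follow at once from its own $m=1$ case plus the functoriality statements, and no disjoint-steps construction would ever have been needed. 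What that lemma would have to hide is the cross-term problem: for any test pair $(x,\phi)$, the sum $\sum_{i<n}a_i\phi(f^i(x))$ contains the times $i\not\equiv r\pmod m$, whose contributions need not vanish and can cancel the progression you control. Your remark that you ``only need a factor map, not a conjugacy, which lets me bypass any appeal to expansiveness'' points at precisely the spot where the argument breaks: without expansiveness you indeed cannot make $\pi$ injective (shadowing points are not unique, and $Y$ can be far larger than a Cantor set), but without a conjugacy you cannot invoke the only cited result that handles the change of time scale.

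The paper avoids this entirely by never passing through a symbolic factor: given the weight sequence $(a_i)$, it builds \emph{one} pseudo-orbit adapted to the signs of $(a_{r+hm})_{h\ge1}$ (an $x$-block when $a_{r+hm}>0$, a $y$-block otherwise), shadows it by a single point $p$, and tests against the explicit function $\phi$ which is $+1$ on $B_\epsilon(x)$, $-1$ on $B_\epsilon(y)$, and $0$ on $B_\epsilon(q)$ for every other point $q$ of $E=S\cup\mathcal{O}_f(x)\cup\mathcal{O}_f(y)$. Condition (1) guarantees that $x$ and $y$ are isolated in $E$, so $\phi$ kills all cross terms identically and $\sum_i a_i\phi(f^i(p))$ collapses to $\sum_h|a_{r+hm}|$, with no cancellation possible. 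This spatial separation of $\{x,y\}$ from the rest of $E$ is the paper's substitute for ``disjoint steps,'' and it is what you would have to formalize to salvage your route---at which point you would be redoing the paper's computation. A smaller, fixable defect: since you freeze the past of $\xi^\omega$ as repeated copies of $CB_0$, the time-$m$ shift of $\xi^\omega$ is not of the form $\xi^{\sigma\omega}$, so $f^m(Y)\subseteq Y$ can fail as you defined $Y$; one should instead use two-sided sequences in $\{0,1\}^{\mathbb{Z}}$ (as the paper does in its proof of Theorem 2) or allow arbitrary admissible pasts.
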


Let us prove Theorem 1. For any $x\in X$ and $r>0$, we define subsets $B_r(x),C_r(x)$ of $X$ by
\[
B_r(x)=\{y\in X\colon d(x,y)\le r\},
\]
\[
C_r(x)=\{y\in X\colon d(x,y)\ge r\}.
\]

\begin{proof}[Proof of Theorem 1]
Let
\[
E=S\cup\mathcal{O}_f(x)\cup\mathcal{O}_f(y).
\]
By (1), we have $B_{3\epsilon}(x)\cap E=\{x\}$ and $B_{3\epsilon}(y)\cap E=\{y\}$ for some $\epsilon>0$.  By (4), there is $\delta>0$ such that every $2\delta$-pseudo orbit $(w_i)_{i\in\mathbb{Z}}$ of $f$ with $w_i\in E$ for all $i\in\mathbb{Z}$ is $\epsilon$-shadowed by some $p\in X$. By (1) and (2), there are $z,w\in S$ and $k,l\ge1$ such that
\[
\max\{d(w,f^{-k}(x)),d(w,f^{-k}(y)),d(f^l(x),z),d(f^l(y),z)\}\le\delta.
\]
By (3), there is a $\delta$-chain $(z_i)_{i=0}^j$ of $f|_S$ such that $z_0=z$ and $z_j=w$. Let $m=j+k+l$. We consider the following two $2\delta$-chains
\[
(x_i)_{i=0}^m=(z_0,z_1,\dots,z_{j-1},f^{-k}(x),\dots,f^{-1}(x),x,f(x),\dots,f^{l-1}(x),z)
\]
and
\[
(y_i)_{i=0}^m=(z_0,z_1,\dots,z_{j-1},f^{-k}(y),\dots,f^{-1}(y),y,f(y),\dots,f^{l-1}(y),z)
\]
of $f$. Given any bounded sequence $(a_i)_{i\ge0}$ of real numbers with
\[
\limsup_{n\to\infty}\frac{1}{n}\sum_{i=0}^{n-1}|a_i|>0,
\]
we have
\[
\limsup_{n\to\infty}\frac{1}{n}\sum_{h=1}^{n}|a_{r+hm}|>0
\]
for some $0\le r\le m-1$. We define a $2\delta$-pseudo orbit $\Gamma=(w_i)_{i\in\mathbb{Z}}$ of $f$ by
\begin{itemize}
\item $w_i=f^{i-r-l}(z)$ for all $i\le r+l$,
\item
\begin{equation*}
w_{r+l+(h-1)m+i}=
\begin{cases}
x_i&\text{if $a_{r+hm}>0$}\\
y_i&\text{if $a_{r+hm}\le0$}
\end{cases}
\end{equation*}
for all $h\ge1$ and $0\le i\le m-1$.
\end{itemize}
The $2\delta$-pseudo orbit $\Gamma=(w_i)_{i\in\mathbb{Z}}$ satisfies $w_i\in E$ for all $i\in\mathbb{Z}$ and so is $\epsilon$-shadowed by some $p\in X$. Note that
\begin{itemize}
\item
\begin{equation*}
w_{r+hm}=
\begin{cases}
x&\text{if $a_{r+hm}>0$}\\
y&\text{if $a_{r+hm}\le0$}
\end{cases}
\end{equation*}
for all $h\ge1$,
\item $w_i\in E\setminus\{x,y\}$ for all $i\ge0$ with $i\not\in\{r+hm\colon h\ge1\}$. 
\end{itemize}
We define $\phi\in C(X)$ by
\[
\phi(q)=\frac{d(q,C_{2\epsilon}(x))}{d(q,B_\epsilon(x))+d(q,C_{2\epsilon}(x))}-\frac{d(q,C_{2\epsilon}(y))}{d(q,B_\epsilon(y))+d(q,C_{2\epsilon}(y))}
\]
for all $q\in X$. Note that
\begin{equation*}
\phi(q)=
\begin{cases}
1&\text{for all $q\in B_\epsilon(x)$}\\
-1&\text{for all $q\in B_\epsilon(y)$}\\
0&\text{for all $r\in E\setminus\{x,y\}$ and $q\in B_\epsilon(r)$}
\end{cases}
.
\end{equation*}
It follows that
\[
\sum_{i=0}^{r+l+nm-1}a_i\phi(f^i(p))=\sum_{h=1}^n|a_{r+hm}|
\]
and so
\begin{align*}
\limsup_{n\to\infty}\frac{1}{n}\left|\sum_{i=0}^{n-1}a_i\phi(f^i(p))\right|&\ge\limsup_{n\to\infty}\frac{1}{r+l+nm}\left|\sum_{i=0}^{r+l+nm-1}a_i\phi(f^i(p))\right|\\
&\ge\frac{1}{m}\cdot\limsup_{n\to\infty}\frac{1}{n}\sum_{h=1}^{n}|a_{r+hm}|>0.
\end{align*}
Since $(a_i)_{i\ge0}$ is arbitrary, we conclude that $f$ is Bohr chaotic, completing the proof. 
\end{proof}

Given a homeomorphism $f\colon X\to X$ and $x\in X$, we define subsets $W^u(x),W^s(x)$ of $X$ by
\[
W^u(x)=\{y\in X\colon\lim_{i\to-\infty}d(f^i(x),f^i(y))=0\},
\]
\[
W^s(x)=\{y\in X\colon\lim_{i\to+\infty}d(f^i(x),f^i(y))=0\}.
\]
We obtain the following corollary of Theorem 1.

\begin{cor}
For a homeomorphism $f\colon X\to X$, if there are $x,y,z\in X$ with $\mathcal{O}_f(x)\ne\mathcal{O}_f(y)$ and a closed subset $S$ of $X$ with $f(S)=S$ such that
\begin{itemize}
\item[(1)] $z\in S$,
\item[(2)] $\{x,y\}\subset[W^u(z)\cap W^s(z)]\setminus S$,
\item[(3)] $f|_S\colon S\to S$ is chain transitive,
\item[(4)] $f$ has shadowing on $S\cup\mathcal{O}_f(x)\cup\mathcal{O}_f(y)$,
\end{itemize}
then $f$ is Bohr chaotic.
\end{cor}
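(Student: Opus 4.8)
The plan is to show that hypotheses (1)--(4) of the Corollary imply hypotheses (1)--(4) of Theorem 1, with the same points $x$, $y$ and the same set $S$, and then to invoke Theorem 1 directly. Conditions (3) and (4) of the Corollary coincide verbatim with conditions (3) and (4) of Theorem 1, so only the two conditions involving the stable and unstable sets require attention.

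First I would establish the inclusions $W^u(z)\subset W^u(S)$ and $W^s(z)\subset W^s(S)$. The key point is that $z\in S$ together with $f(S)=S$ forces $f^i(z)\in S$ for every $i\in\mathbb{Z}$; hence for any $w\in W^u(z)$ one has
\[
d(f^i(w),S)\le d(f^i(w),f^i(z))\longrightarrow 0\quad(i\to-\infty),
\]
so $w\in W^u(S)$, and symmetrically for the stable sets. Applying this to $x$ and $y$ yields $\{x,y\}\subset W^u(S)\cap W^s(S)$, and since hypothesis (2) of the Corollary already places $x,y$ outside $S$, condition (1) of Theorem 1 follows.

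Next I would verify condition (2) of Theorem 1. Since $x,y\in W^u(z)$, the triangle inequality gives
\[
d(f^i(x),f^i(y))\le d(f^i(x),f^i(z))+d(f^i(z),f^i(y))\longrightarrow 0\quad(i\to-\infty),
\]
and the analogous estimate using $x,y\in W^s(z)$ gives convergence to $0$ as $i\to+\infty$. This produces full limits, which in particular makes both liminfs in condition (2) of Theorem 1 equal to zero. With $x\neq y$ supplied directly by hypothesis, all four hypotheses of Theorem 1 are in force, and Theorem 1 yields that $f$ is Bohr chaotic.

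I do not expect a genuine obstacle here: the argument is a routine reduction resting on two elementary facts---the orbit of $z$ stays in $S$ because $f(S)=S$, and two points asymptotic to a common third point are asymptotic to each other. The only place demanding mild care is the first inclusion, where the full invariance $f(S)=S$ (rather than mere forward invariance) is what guarantees $f^i(z)\in S$ for all negative $i$ as well, a fact needed for the unstable part.
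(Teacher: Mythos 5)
Your reduction is correct and is exactly the argument the paper intends (the paper states the corollary as an immediate consequence of Theorem 1 without spelling out the details): the orbit of $z$ stays in $S$ by invariance, giving $W^u(z)\subset W^u(S)$ and $W^s(z)\subset W^s(S)$, and the triangle inequality through $f^i(z)$ gives condition (2) of Theorem 1 with full limits. Nothing is missing; your remark about needing $f(S)=S$ rather than mere forward invariance for the unstable part is a fair observation.
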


A topological consequence of the shadowing lemma is that a $C^1$-diffeomorphism of a closed differentiable manifold is expansive and has shadowing on a neighborhood of its hyperbolic set (see, e.g., \cite[Theorem 18.1.2]{KH}). By using Corollary 1, we will obtain sufficient conditions for Bohr chaos in terms of shadowing and expansiveness (see Theorems 2 and 3 below). Those results directly apply to hyperbolic sets.

Our subsequent results are related to the notion of {\em chain components}. The definition is as follows. Let $f\colon X\to X$ be a homeomorphism. For any $x,y\in X$, the notation $x\rightarrow y$ means that for every $\delta>0$, there is a $\delta$-chain $(x_i)_{i=0}^k$ of $f$ with $x_0=x$ and $x_k=y$. The {\em chain recurrent set} $CR(f)$ for $f$ is defined by
\[
CR(f)=\{x\in X\colon x\rightarrow x\}.
\]
We define a relation $\leftrightarrow$ in
\[
CR(f)^2=CR(f)\times CR(f)
\]
by: for any $x,y\in CR(f)$, $x\leftrightarrow y$ if and only if $x\rightarrow y$ and $y\rightarrow x$. Note that $\leftrightarrow$ is a closed equivalence relation in $CR(f)^2$ and satisfies $x\leftrightarrow f(x)$ for all $x\in CR(f)$. An equivalence class $C$ of $\leftrightarrow$ is called a {\em chain component} for $f$. We denote by $\mathcal{C}(f)$ the set of chain components for $f$. The basic properties of chain components are the following
\begin{itemize}
\item $CR(f)=\bigsqcup_{C\in\mathcal{C}(f)}C$, a disjoint union,
\item $C$ is closed in $X$ and satisfies $f(C)=C$ for all $C\in\mathcal{C}(f)$,
\item $f|_C\colon C\to C$ is chain transitive for all $C\in\mathcal{C}(f)$.
\end{itemize}

\begin{rem}
\normalfont
For a homeomorphism $f\colon X\to X$ and $x\in X$, the {\em $\omega$-limit set} $\omega(x,f)$ (resp.\:{\em $\alpha$-limit set} $\alpha(x,f)$) of $x$ for $f$ is defined as the set of $y\in X$ such that $\lim_{j\to\infty}f^{i_j}(x)=y$ for some sequence $0\le i_1<i_2<\cdots$ (resp.\:$0\ge i_1>i_2>\cdots$). Since we have $y\rightarrow z$ for all $y,z\in\omega(x,f)$ (resp.\:$y,z\in\alpha(x,f)$) there is a unique $D\in\mathcal{C}(f)$ (resp.\:$C\in\mathcal{C}(f)$) such  that $\omega(x,f)\subset D$ (resp.\:$\alpha(x,f)\subset C$) and so 
\[
\lim_{i\to-\infty}d(f^i(x),C)=\lim_{i\to+\infty}d(f^i(x),D)=0.
\]
\end{rem}

{\em Expansiveness} is another feature of hyperbolic dynamics that has been extensively studied in the literature (see, e.g., \cite{AH}). We say that a homeomorphism $f\colon X\to X$ is {\em expansive} if there is $e>0$ such that
\[
\sup_{i\in\mathbb{Z}}d(f^i(x),f^i(y))\le e
\]
implies $x=y$ for all $x,y\in X$. In this case, $e>0$ is called an expansive constant for $f$. For a homeomorphism $f\colon X\to X$, we denote by $Per(f)$ the set of periodic points for $f$:
\[
Per(f)=\bigcup_{j\ge1}\{x\in X\colon f^j(x)=x\}.
\]
If $f$ is an expansive homeomorphism, then since
\[
\{x\in X\colon f^j(x)=x\}
\]
is a finite set for all $j\ge1$, $Per(f)$ is a countable set. If $e>0$ is an expansive constant for $f$, then for any $x,y\in X$, since
\[
E_n=\bigcap_{i\le-n}\{z\in X\colon d(f^i(x),f^i(z))\le e/2\}\cap\bigcap_{i\ge n}\{z\in X\colon d(f^i(y),f^i(z))\le e/2\}
\]
is a finite set for all $n\ge0$ and
\[
W^u(x)\cap W^s(y)\subset\bigcup_{n\ge0}E_n,
\]
$W^u(x)\cap W^s(y)$ is a countable set.

From the above observations, we obtain the following lemma.

\begin{lem}
For a homeomorphism $f\colon X\to X$, if
\begin{itemize}
\item $X$ is an uncountable set, 
\item $f$ is expansive,
\end{itemize}
then there is $C\in\mathcal{C}(f)$ such that $C$ is an infinite set.
\end{lem}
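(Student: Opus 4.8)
The plan is to argue by contradiction: suppose that every chain component $C\in\mathcal{C}(f)$ is a finite set, and deduce that $X$ must be countable, contradicting the hypothesis. The engine of the contradiction is the pair of countability facts recalled just before the statement, namely that $Per(f)$ is countable and that $W^u(x)\cap W^s(y)$ is countable for every $x,y\in X$.

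First I would record that a finite chain component is exactly a periodic orbit. Indeed, if $C$ is finite with $f(C)=C$, then for $\delta$ smaller than the least distance between distinct points of $C$ every $\delta$-chain inside $C$ is a genuine orbit segment; chain transitivity of $f|_C$ then forces $C=\mathcal{O}_f(c)$ for each $c\in C$, so $C$ is a single periodic orbit. Consequently, under the contradiction hypothesis $CR(f)=\bigsqcup_{C\in\mathcal{C}(f)}C\subset Per(f)$.

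Next, fix an arbitrary $x\in X$. By Remark 2 there are (finite, hence periodic) chain components $C_x\supset\alpha(x,f)$ and $D_x\supset\omega(x,f)$ with $\lim_{i\to-\infty}d(f^i(x),C_x)=\lim_{i\to+\infty}d(f^i(x),D_x)=0$; that is, $x\in W^u(C_x)\cap W^s(D_x)$. The key step is to upgrade convergence to the periodic orbit into convergence to the orbit of a single periodic point, i.e. to show $x\in W^u(c)\cap W^s(c')$ for some $c\in C_x$ and $c'\in D_x$. Writing $p$ for the period of $C_x$ and $g=f^p$, each point of $C_x$ is $g$-fixed and $d(g^{-n}(x),C_x)\to0$; since the finitely many fixed points are uniformly separated, continuity of $g^{-1}$ near each of them shows that the index of the nearest fixed point is eventually constant, so $g^{-n}(x)$ converges to a single $c\in C_x$. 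Interpolating with $f^t$, $0\le t<p$, then gives $d(f^i(x),f^i(c))\to0$ as $i\to-\infty$, i.e. $x\in W^u(c)$; the symmetric argument applied to $f^{-1}$ gives $x\in W^s(c')$ for some $c'\in D_x$.

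Combining, every $x\in X$ lies in $W^u(c)\cap W^s(c')$ for some $c,c'\in Per(f)$, whence
\[
X=\bigcup_{c,c'\in Per(f)}\big(W^u(c)\cap W^s(c')\big).
\]
Since $Per(f)$ is countable this is a countable union, and each member is countable by the recalled fact, so $X$ is countable---the desired contradiction. I expect the only genuinely delicate point to be the tracking argument of the third paragraph (passing from $W^u(C_x)$ to $W^u(c)$); everything else is bookkeeping with the already-established countability statements.
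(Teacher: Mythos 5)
Your proof is correct and follows essentially the same route as the paper's: assume every chain component is finite (hence a periodic orbit), use Remark 2 to place each point of $X$ in some $W^u(c)\cap W^s(c')$ with $c,c'\in Per(f)$, and derive a contradiction from the countability of $Per(f)$ and of each $W^u(c)\cap W^s(c')$ under expansiveness. The only difference is that you spell out the tracking argument upgrading $x\in W^u(C_x)$ to $x\in W^u(c)$ for a single periodic point $c$ (and likewise for $W^s$), a step the paper asserts without proof; your argument for it is sound.
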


\begin{proof}
Assume the contrary, i.e., every $C\in\mathcal{C}(f)$ is a finite set. Then, every $C\in\mathcal{C}(f)$ is a periodic orbit. Given any $z\in X$, we have
\[
\lim_{i\to-\infty}d(f^i(z),C)=\lim_{i\to+\infty}d(f^i(z),D)=0
\]
for some $C,D\in\mathcal{C}(f)$. Since $C$ and $D$ are periodic orbits, there are $x\in C$ and $y\in D$ such that $z\in W^u(x)\cap W^s(y)$. Since $z\in X$ is arbitrary, we obtain
\[
X=\bigcup_{x,y\in Per(f)}\left[W^u(x)\cap W^s(y)\right].
\]
Since $f$ is expansive, it follows that $X$ is a countable set, a contradiction. This completes the proof of the lemma.
\end{proof}

Let $f\colon X\to X$ be a chain transitive homeomorphism. A {\em chain proximal relation} $\sim_f$ in
\[
X^2=X\times X
\]
is defined by: for any $x,y\in X$, $x\sim_f y$ if and only if for every $\delta>0$, there is a pair
\[
((x_i)_{i=0}^k,(y_i)_{i=0}^k)
\]
of $\delta$-chains of $f$ such that $(x_0,y_0)=(x,y)$ and $x_k=y_k$. It is known that $\sim_f$ is a closed $(f\times f)$-invariant equivalence relation in $X^2$. It is also known that for any $x,y\in X$, the following conditions are equivalent
\begin{itemize}
\item $x\sim_f y$,
\item for every $\delta>0$, there is $m\ge1$ such that for any $a,b\in\{x,y\}$, we have a $\delta$-chain $\gamma_{ab}=(z_i^{ab})_{i=0}^m$ of $f$ with $z_0^{ab}=a$ and $z_m^{ab}=b$.
\end{itemize}
If $x\sim_f y$ implies $x=y$ for all $x,y\in X$, then $X$ is a periodic orbit or an odometer (see the proof of Theorem 6 in \cite{RW}). Note that $X$ cannot be an odometer whenever $f$ is expansive. In other words, if
\begin{itemize}
\item $f$ is expansive,
\item $x\sim_f y$ implies $x=y$ for all $x,y\in X$,
\end{itemize}
then $X$ is a periodic orbit and so is a finite set.

\begin{rem}
\normalfont
The relation $\sim_f$ was introduced in \cite{S} and later rediscovered in \cite{RW} (based on the argument given in \cite[Exercise 8.22]{A}). 
\end{rem}

According to the above observation, we have the following lemma.

\begin{lem}
For a homeomorphism $f\colon X\to X$, if
\begin{itemize}
\item $X$ is an infinite set, 
\item $f$ is expansive and chain transitive,
\end{itemize}
then there are $x,y\in X$ such that $x\ne y$ and $x\sim_f y$.
\end{lem}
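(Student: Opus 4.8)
The plan is to prove the contrapositive, invoking the structural observation recorded immediately above the statement of the lemma. I would argue by contradiction: assume that no two distinct points of $X$ are chain proximal, that is, assume $x\sim_f y$ implies $x=y$ for all $x,y\in X$. Since $f$ is chain transitive by hypothesis, the chain proximal relation $\sim_f$ is defined and the cited facts about it are all available to use.

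Next I would apply the observation that, for an expansive chain transitive homeomorphism, triviality of $\sim_f$ forces $X$ to be a periodic orbit. Concretely, triviality of $\sim_f$ first yields (via the proof of Theorem 6 in \cite{RW}) that $X$ is either a periodic orbit or an odometer; expansiveness of $f$ then excludes the odometer case, leaving $X$ a periodic orbit and hence a finite set. This directly contradicts the standing hypothesis that $X$ is an infinite set.

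From this contradiction I would conclude that the assumption fails, so there must exist $x,y\in X$ with $x\ne y$ and $x\sim_f y$, as claimed. The substantive input is entirely the quoted structure theorem for $\sim_f$ together with the incompatibility of odometers and expansiveness; granting those, the lemma is simply the contrapositive of the preceding observation. I therefore do not anticipate any genuine obstacle here beyond correctly citing and assembling the facts already established in the text, and the whole argument amounts to a short reduction rather than any new construction.
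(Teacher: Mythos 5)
Your proposal is correct and is exactly the paper's argument: the paper derives this lemma immediately from the observation stated just before it (triviality of $\sim_f$ forces $X$ to be a periodic orbit or an odometer by the proof of Theorem 6 in \cite{RW}, expansiveness rules out the odometer, so $X$ would be finite), which is precisely the contrapositive reduction you describe. Nothing is missing; the paper itself offers no further detail beyond this.
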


In order to state and prove the next theorem, we introduce a notation and a definition. For a closed subset $S$ of $X$ and $r>0$, we denote by $B_r(S)$ the closed $r$-neighborhood of $S$:
\[
B_r(S)=\{x\in X\colon d(x,S)\le r\}.
\] 
Given a homeomorphism $f\colon X\to X$ and a subset $S$ of $X$, we say that $f$ is {\em expansive} on $S$ if there is $e>0$ such that
\[
\sup_{i\in\mathbb{Z}}d(f^i(x),f^i(y))\le e
\]
implies $x=y$ for all $x,y\in S$ with $\mathcal{O}_f(x)\cup\mathcal{O}_f(y)\subset S$. In this case, $e>0$ is called an expansive constant for $f$ on $S$.

\begin{thm}
Given a homeomorphism $f\colon X\to X$, if there is a closed subset $C$ of $X$ with $f(C)=C$ such that
\begin{itemize}
\item $C$ is an infinite set,
\item $f|_C\colon C\to C$ is chain transitive,
\item $f$ is expansive and has shadowing on $B_b(C)$ for some $b>0$,
\end{itemize}
then $f$ is Bohr chaotic.
\end{thm}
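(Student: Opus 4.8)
The plan is to deduce Theorem 2 from Corollary 1. Since $C$ is closed and $f$-invariant and $f$ is expansive on $X$ with an expansive constant $e>0$, the restriction $f|_C\colon C\to C$ is an expansive homeomorphism of the infinite compact metric space $C$ with the same constant $e$, and it is chain transitive by hypothesis. Hence Lemma 2 applies to $(C,f|_C)$ and yields $u,v\in C$ with $u\neq v$ and $u\sim_{f|_C}v$; by expansiveness there is $k\in\mathbb{Z}$ with $d(f^k(u),f^k(v))>e$. I will invoke Corollary 1 with $S$ a suitable periodic orbit, $z$ a periodic point of $S$, and two points $x',y'$ that are homoclinic to the orbit of $z$ but lie off $S$.

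First I would produce $S$. Fix a small $\epsilon>0$ (eventually with $\epsilon\le b$ and $2\epsilon<e$) and let $\delta>0$ be the constant from shadowing on $B_b(C)$. Chain transitivity of $f|_C$ furnishes a periodic $\delta$-pseudo orbit $\eta=(\eta_i)_{i\in\mathbb{Z}}$ of period $P$ with $\eta_i\in C$ for all $i$. Let $q$ be a point $\epsilon$-shadowing $\eta$. Both $q$ and $f^P(q)$ then $\epsilon$-shadow $\eta$, so $\sup_i d(f^i(q),f^i(f^P(q)))\le 2\epsilon<e$, and expansiveness forces $f^P(q)=q$; thus $q$ is periodic and $S:=\mathcal{O}_f(q)\subset B_\epsilon(C)\subset B_b(C)$ is a chain transitive closed invariant set. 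Since $C$ is infinite and $S$ finite, I may pick $w\in C\setminus S$, so $d(w,S)>0$, and then shrink $\epsilon$ (re-selecting $\delta$, and keeping the same $\eta,q$) so that in addition $2\epsilon<d(w,S)$; this point $w$ will be used to push the constructed points off $S$.

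Next I would build $x'$ and $y'$. Splicing genuine copies of $\eta$ in the two tails with a bounded middle excursion routed by $\delta$-chains inside $C$, I form two $\delta$-pseudo orbits $\Xi^{(1)},\Xi^{(2)}$ with $\Xi^{(1)}_i=\Xi^{(2)}_i=\eta_i$ for all $|i|\ge T$, whose middles both pass through $w$ at a common time $t_0$ and which, at one common time $t$, satisfy $\Xi^{(1)}_t=f^k(u)$ and $\Xi^{(2)}_t=f^k(v)$ (route each excursion to $u$ resp.\ $v$, follow the genuine orbit segment up to $f^k$, then return). The equal-length chains between $u$ and $v$ afforded by $u\sim_{f|_C}v$, together with chain transitivity, make it transparent to give the two excursions the same length, so the tails stay aligned. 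Let $x',y'$ be points $\epsilon$-shadowing $\Xi^{(1)},\Xi^{(2)}$; their full orbits lie in $B_\epsilon(C)\subset B_b(C)$. Since the tails of both $\Xi^{(1)},\Xi^{(2)}$ are $\eta$, which is $\epsilon$-shadowed by $q$, the iterates $f^i(x')$ and $f^i(y')$ agree to within $2\epsilon<e$ with $f^i(q)$ for all large $|i|$; the standard consequence of (uniform) expansiveness (if $d(f^i(a),f^i(b))\le e$ for all $i$ in a half-line then $d(f^i(a),f^i(b))\to 0$ along it) upgrades this one-sided closeness to genuine convergence, whence $x',y'\in W^u(q)\cap W^s(q)$ with matched phase.

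Finally I would check the hypotheses of Corollary 1 with this $S$, $z=q$, and the pair $x',y'$. Condition (1) is $q\in S$; condition (3) holds because a periodic orbit is chain transitive; condition (4) follows since shadowing on $B_b(C)$ restricts to the subset $S\cup\mathcal{O}_f(x')\cup\mathcal{O}_f(y')\subset B_b(C)$. For condition (2) it remains to see that $x',y'\in[W^u(q)\cap W^s(q)]\setminus S$ are distinct: if, say, $x'\in S$ then $f^{t_0}(x')\in S$, while $d(f^{t_0}(x'),w)\le\epsilon$, forcing $d(w,S)\le\epsilon<d(w,S)$, a contradiction, so $x',y'\notin S$; and at time $t$ one has $d(f^t(x'),f^t(y'))\ge d(f^k(u),f^k(v))-2\epsilon>e-2\epsilon>0$, so $x'\neq y'$. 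Corollary 1 then yields that $f$ is Bohr chaotic. The main obstacle is precisely this construction: producing two pseudo orbits with identical tails, so that the two shadows are genuinely bi-asymptotic to the same periodic orbit (i.e.\ lie in $W^u(q)\cap W^s(q)$, not merely near $C$), while their middles force the shadows to be distinct and to leave $S$. The two expansive inputs---the asymptoticity upgrade and the separation $d(f^k(u),f^k(v))>e$---are exactly what make this simultaneous control possible, the remaining juggling of the constants $\epsilon,\delta,d(w,S)$ being routine.
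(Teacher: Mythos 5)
Your overall skeleton---Lemma 2 to get a chain-proximal pair $u\neq v$ in $C$, then shadowing plus expansiveness to manufacture a periodic orbit $S$ and two homoclinic points feeding Corollary 1---is the same as the paper's, and several individual steps are sound: the periodic shadow $q$ with $f^P(q)=q$, the expansiveness upgrade from ``$\le e$ on a half-line'' to convergence, and the separation $d(f^t(x'),f^t(y'))\ge d(f^k(u),f^k(v))-2\epsilon>0$. (Minor: the hypothesis gives expansiveness only on $B_b(C)$, not on $X$, but all your applications involve orbits inside $B_b(C)$, so that slip is harmless.) The genuine gap is the step pushing $x',y'$ off $S$. The point $w$ must satisfy $2\epsilon<d(w,S)$, but $S$ only comes into existence after $\epsilon$, $\delta$ and $\eta$ are fixed, and for that $\epsilon$ there may be no admissible $w$ at all: nothing prevents $C\subset B_{2\epsilon}(S)$. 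Your patch---shrink $\epsilon$, re-select $\delta$, ``keeping the same $\eta,q$''---does not work: $\eta$ is only a $\delta_{\mathrm{old}}$-pseudo orbit, so the spliced sequences $\Xi^{(1)},\Xi^{(2)}$, whose tails are copies of $\eta$, are only $\delta_{\mathrm{old}}$-pseudo orbits; shadowing therefore yields $x',y'$ within the old accuracy $\epsilon_{\mathrm{old}}$, not the new one, and from $x'\in S$ you can only conclude $d(w,S)\le\epsilon_{\mathrm{old}}$, which contradicts nothing. There is a second, smaller gap: for both shadows to land in $W^u(q)\cap W^s(q)$ for the \emph{same} $q$, as Corollary 1 requires, you need $\Xi^{(j)}_i=\eta_i$ with the same indices on both tails, i.e.\ the common excursion length must be divisible by the period $P$ of $\eta$. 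Equal lengths of the two excursions (which is what $u\sim_{f|_C}v$ gives) do not yield this, and padding with copies of the $\eta$-loop never changes the length mod $P$; this divisibility is not ``transparent'' and is nowhere arranged.

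Both defects disappear---and you essentially arrive at the paper's proof---if you stop choosing $\eta$ independently and build everything from the equal-length chains furnished by $u\sim_{f|_C}v$: take $\eta$ to be the periodic extension of the length-$m$ chain from $u$ to $u$ (so the phase condition is automatic), let $\Xi^{(1)}$ substitute the chains $u\to v$, $v\to u$ in one pair of consecutive blocks and $\Xi^{(2)}$ in two, and drop $w$ entirely. Then if $x'\in S$, matched-phase asymptoticity to the periodic point $q$ forces $x'=q$ (a periodic distance sequence tending to $0$ is identically $0$), while $q$ tracks $u$ and $x'$ tracks $v$ at a common time, contradicting $d(u,v)>e\ge 2\epsilon$; the same separation gives $x'\ne y'$ and $y'\notin S$. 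This coupled construction is exactly what the paper does, except that it codes all of $\{u,v\}^{\mathbb{Z}}$ at once, obtains a homeomorphic copy of the full shift inside $B_b(C)$, and reads off the three sequences (all $u$'s; one $v$; two $v$'s). Your ``three pseudo-orbits only'' economy is viable in principle, but the data feeding those three pseudo-orbits must be chosen in this coupled way, not in the order your write-up fixes it.
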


By using Corollary 1 and Lemma 2, we prove Theorem 2.

\begin{proof}[Proof of Theorem 2]
Let $g=f|_C\colon C\to C$. Since
\begin{itemize}
\item $C$ is an infinite set,
\item $g$ is expansive and chain transitive,
\end{itemize}
by Lemma 2, there are $x,y\in C$ such that $x\ne y$ and $x\sim_g y$. Let $e>0$ be an expansive constant for $f$ on $B_b(C)$ and let
\[
0<\epsilon\le\min\{b,e/2,d(x,y)/3\}.
\]
Since $f$ has shadowing on $C$, there is $\delta>0$ such that every $\delta$-pseudo orbit $(z_i)_{i\in\mathbb{Z}}$ of $f$ with $z_i\in C$ for all $i\in\mathbb{Z}$ is $\epsilon$-shadowed by some $z\in X$. As $x\sim_g y$, there is $m\ge1$ such that for any $\alpha,\beta\in\{x,y\}$, we have a $\delta$-chain $\gamma_{\alpha\beta}=(w_i^{\alpha\beta})_{i=0}^m$ of $g$ with $w_0^{\alpha\beta}=\alpha$ and $w_m^{\alpha\beta}=\beta$. Given any $c=(c_j)_{j\in\mathbb{Z}}\in\{x,y\}^\mathbb{Z}$, we define a $\delta$-pseudo orbit $\Gamma_c=(z_i^c)_{i\in\mathbb{Z}}$ of $f$ by
\[
z_{jm+r}^c=w_r^{c_jc_{j+1}}
\]
for all $j\in\mathbb{Z}$ and $0\le r\le m-1$. Note that $z_i^c\in C$ for all $i\in\mathbb{Z}$. Let us consider a subset $Y$ of $X$ defined as
\[
Y=\{z\in X\colon\text{$\Gamma_c$ is $\epsilon$-shadowed by $z$ for some $c\in\{x,y\}^\mathbb{Z}$}\}.
\]
Note that $\mathcal{O}_f(z)\subset B_\epsilon(C)\subset B_b(C)$ for all $z\in Y$. We easily see that $Y$ is a closed subset of $X$ with $f^m(Y)=Y$. We define
a map
\[
\pi\colon Y\to\{x,y\}^\mathbb{Z}
\]
so that $\Gamma_{\pi(z)}$ is $\epsilon$-shadowed by $z$ for all $z\in Y$. We easily see that $\pi$ is a continuous surjection and satisfies
\[
\pi\circ f^m|_{Y}=\sigma\circ\pi,
\]
where
\[
\sigma\colon\{x,y\}^\mathbb{Z}\to\{x,y\}^\mathbb{Z}
\]
is the shift map. As $\epsilon\le e/2$, $\pi$ is injective and so is a homeomorphism. For every $j\in\{0\}\cup\mathbb{N}\cup\{\infty\}$, we define $p^{(j)}=(p^{(j)}_k)_{k\in\mathbb{Z}} (\in\{x,y\}^\mathbb{Z})$ by
\begin{equation*}
p^{(j)}_k=
\begin{cases}
y&\text{if $0\le k<j$}\\
x&\text{if $k\le -1$ or $j\le k$}
\end{cases}
\end{equation*}
for all $k\in\mathbb{Z}$. Let $q^{(j)}=\pi^{-1}(p^{(j)})$ ($\in Y$), $0\le j\le\infty$, and note that
\begin{itemize}
\item $q^{(j)}\ne q^{(j')}$ for all $0\le j<j'\le\infty$,
\item $f^m(q^{(0)})=q^{(0)}$,
\item $\lim_{L\to-\infty}f^{mL}(q^{(j)})=\lim_{L\to+\infty}f^{mL}(q^{(j)})=q^{(0)}$ for all $1\le j<\infty$,
\item $\lim_{j\to\infty}q^{(j)}=q^{(\infty)}$.
\end{itemize}
If $\mathcal{O}_f(q^{(1)})=\mathcal{O}_f(q^{(j)})$, i.e., $q^{(j)}\in\mathcal{O}_f(q^{(1)})$ for all $2\le j<\infty$, then we obtain $q^{(\infty)}\in\mathcal{O}_f(q^{(0)})$, which implies $f^m(q^{(\infty)})=q^{(\infty)}$ and so $\sigma(p^{(\infty)})=p^{(\infty)}$, a contradiction. It follows that  $\mathcal{O}_f(q^{(1)})\ne\mathcal{O}_f(q^{(j)})$ for some $2\le j<\infty$. Note that
\[
\mathcal{O}_f(q^{(0)})\cup\mathcal{O}_f(q^{(1)})\cup\mathcal{O}_f(q^{(j)})\subset B_b(C).
\]
Letting $S=\mathcal{O}_f(q^{(0)})$, we see that
\begin{itemize}
\item $\mathcal{O}_f(q^{(1)})\ne\mathcal{O}_f(q^{(j)})$,
\item $S$ is a closed subset of $X$ with $f(S)=S$,
\item $q^{(0)}\in S$,
\item $\{q^{(1)},q^{(j)}\}\subset[W^u(q^{(0)})\cap W^s(q^{(0)})]\setminus S$,
\item $f|_S\colon S\to S$ is chain transitive,
\item $f$ has shadowing on $S\cup\mathcal{O}_f(q^{(1)})\cup\mathcal{O}_f(q^{(j)})$.
\end{itemize}
By Corollary 1, we conclude that $f$ is Bohr chaotic, completing the proof of the theorem.
\end{proof}

\begin{rem}
\normalfont
The above proof shows that the assumptions of Theorem 2 ensure the existence of $m\ge1$ and a closed subset $Y$ of $X$ with $f^m(Y)\subset Y$ for which there is a homeomorphism $h\colon Y\to\{0,1\}^\mathbb{Z}$ such that
\[
h\circ f^m|_Y=\sigma\circ h,
\]
where $\sigma\colon\{0,1\}^\mathbb{Z}\to\{0,1\}^\mathbb{Z}$ is the shift map. As mentioned in Remark 1, by Theorem 1.1 of \cite{FFRS}, this condition implies that $f$ is Bohr chaotic. However, the proof of Theorem 1.1 in \cite{FFRS} relies on complicated arguments to construct a {\em horseshoe with disjoint steps}. By using Corollary 1, our proof of Theorem 2 avoids the arguments in \cite{FFRS}. Our proof of Theorem 1 is also independent of \cite[Theorem A]{HLT}.
\end{rem}

As a direct consequence of Lemma 1 and Theorem 2, we obtain the following theorem.

\begin{thm}
Given a homeomorphism $f\colon X\to X$, if there is a closed subset $\Lambda$ of $X$ with $f(\Lambda)=\Lambda$ such that
\begin{itemize}
\item $\Lambda$ is an uncountable set,
\item $f$ is expansive and has shadowing on $B_b(\Lambda)$ for some $b>0$,
\end{itemize}
then $f$ is Bohr chaotic.
\end{thm}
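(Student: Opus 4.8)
The plan is to realize Theorem 3 as a direct consequence of Lemma 1 and Theorem 2: Lemma 1 produces an infinite chain component, and Theorem 2 turns such a component into Bohr chaos. The one point requiring care is to extract the infinite chain component \emph{inside} $\Lambda$, so that the shadowing hypothesis, which is only assumed on the neighborhood $B_b(\Lambda)$, descends to a neighborhood of that component.

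First I would pass to the restriction $g=f|_\Lambda\colon\Lambda\to\Lambda$. Since $\Lambda$ is closed in the compact space $X$ and $f(\Lambda)=\Lambda$, $g$ is a homeomorphism of the compact metric space $\Lambda$ (note $f^{-1}(\Lambda)=\Lambda$ as well). Expansiveness of $f$ restricts to expansiveness of $g$, since the same expansive constant $e>0$ works for all pairs in $\Lambda\subset X$, and $\Lambda$ is uncountable by hypothesis. Thus Lemma 1, applied to $g$, yields a chain component $C\in\mathcal{C}(g)$ that is an infinite set.

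Next I would check the hypotheses of Theorem 2 for this $C$. By the basic properties of chain components, $C$ is closed in $\Lambda$, hence closed in $X$, satisfies $f(C)=C$, and $f|_C=g|_C$ is chain transitive; together with the infinitude of $C$ this accounts for the first two bullets and the chain transitivity bullet. For the last bullet, $f$ is expansive by hypothesis, so it remains only to exhibit a neighborhood of $C$ on which $f$ has shadowing. Here I would use $C\subset\Lambda$, which gives $d(\cdot,\Lambda)\le d(\cdot,C)$ pointwise and hence $B_b(C)\subset B_b(\Lambda)$. Consequently every $\delta$-pseudo orbit with all points in $B_b(C)$ is a $\delta$-pseudo orbit with all points in $B_b(\Lambda)$, so shadowing on $B_b(\Lambda)$ forces shadowing on $B_b(C)$ with the same parameters. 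All hypotheses of Theorem 2 then hold for $C$ with the same constant $b$, and Theorem 2 gives that $f$ is Bohr chaotic.

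The thing to get right, rather than a genuine obstacle, is that the infinite chain component must be drawn from $\mathcal{C}(f|_\Lambda)$ and not from $\mathcal{C}(f)$: an infinite chain component of $f$ on $X$ need not meet $\Lambda$ at all, and for such a component the shadowing hypothesis, assumed only near $\Lambda$, would provide no control. Restricting to $\Lambda$ before applying Lemma 1 is exactly what keeps $C$ inside $\Lambda$ and lets the shadowing on $B_b(\Lambda)$ descend to $B_b(C)$.
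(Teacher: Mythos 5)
Your proposal is correct and follows exactly the paper's own route: apply Lemma 1 to the restriction $f|_\Lambda$ (not to $f$ on $X$) to obtain an infinite chain component $C\subset\Lambda$, then feed $C$ into Theorem 2, using $B_b(C)\subset B_b(\Lambda)$ to transfer the shadowing and expansiveness hypotheses. The paper's proof is just a terser version of the same argument, leaving implicit the verifications you spell out.
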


\begin{proof}
Since $\Lambda$ is an uncountable set and $f|_\Lambda\colon\Lambda\to\Lambda$ is expansive, by Lemma 1, there is $C\in\mathcal{C}(f|_\Lambda)$ such that $C$ is an infinite set. Since
\begin{itemize}
\item $f|_C\colon C\to C$ is chain transitive,
\item $f$ is expansive and has shadowing on $B_b(C)$,
\end{itemize}
Theorem 2 implies that $f$ is Bohr chaotic. 
\end{proof}

The following is a consequence of the shadowing lemma (see, e.g., \cite[Theorem 18.1.2]{KH}).

\begin{lem}
Let $M$ be a closed Riemannian manifold and let
\[
f\colon M\to M
\]
be a $C^1$-diffeomorphism. For any hyperbolic set $\Lambda$ for $f$, $f$ is expansive and has shadowing on $B_b(\Lambda)$ for some $b>0$.
\end{lem}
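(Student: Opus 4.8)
The plan is to deduce both assertions directly from the classical shadowing lemma for hyperbolic sets, so that the entire argument reduces to matching \cite[Theorem 18.1.2]{KH} with the definitions of ``shadowing on a set'' and ``expansive on a set'' used here. That theorem furnishes an open neighborhood $U$ of $\Lambda$ carrying two properties: a shadowing property (for every $\epsilon>0$ there is $\delta>0$ such that $\delta$-pseudo orbits lying in $U$ are $\epsilon$-shadowed by true orbits) and a uniform expansivity estimate for orbits confined to $U$. Since $\Lambda$ is compact and $U$ is open, I would first fix $b>0$ with $B_b(\Lambda)\subseteq U$.

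For the shadowing conclusion, given $\epsilon>0$ I would take the corresponding $\delta>0$ from the lemma. Any $\delta$-pseudo orbit $(x_i)_{i\in\mathbb{Z}}$ with $x_i\in B_b(\Lambda)\subseteq U$ for all $i$ then satisfies the hypothesis and is $\epsilon$-shadowed by some point of $M$; this is precisely shadowing on $B_b(\Lambda)$ in the present sense. The only point requiring attention is that our pseudo orbits are bi-infinite, which is exactly the form the shadowing lemma addresses.

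For expansiveness on $B_b(\Lambda)$, I would invoke the standard persistence of hyperbolicity: after shrinking $b$ if necessary, the maximal $f$-invariant subset $\Lambda_b=\{z\in B_b(\Lambda)\colon\mathcal{O}_f(z)\subseteq B_b(\Lambda)\}$ is again a hyperbolic set, and every hyperbolic set is expansive with some constant $e>0$. Then any $x,y\in B_b(\Lambda)$ with $\mathcal{O}_f(x)\cup\mathcal{O}_f(y)\subseteq B_b(\Lambda)$ lie in $\Lambda_b$, so $\sup_{i\in\mathbb{Z}}d(f^i(x),f^i(y))\le e$ forces $x=y$, which is exactly expansiveness on $B_b(\Lambda)$. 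Finally I would replace $b$ by the smaller of the two values produced above, so that a single $b$ serves both conclusions simultaneously.

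I expect no genuine mathematical obstacle; the substance is entirely contained in \cite[Theorem 18.1.2]{KH} and in the openness of the hyperbolicity condition under passage to the locally maximal invariant set. The only real task is bookkeeping: arranging the two conclusions relative to the \emph{same} neighborhood $B_b(\Lambda)$, and checking the quantifier structure of the paper's ``expansive on a set'' notion, which constrains only those points whose full orbits remain in $B_b(\Lambda)$.
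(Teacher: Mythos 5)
Your proposal is correct and takes essentially the same route as the paper: the paper offers no written proof of this lemma, deferring entirely to the classical shadowing theorem \cite[Theorem 18.1.2]{KH}, and your argument is just a careful unpacking of that citation, including the right reading of the paper's notion of expansiveness on $B_b(\Lambda)$ as constraining only points whose full orbits remain in $B_b(\Lambda)$. Your derivation of the expansiveness half via persistence of hyperbolicity (the maximal invariant set $\Lambda_b$ being hyperbolic, hence expansive) is a standard equivalent of extracting the uniqueness clause of the shadowing theorem, so there is no substantive divergence.
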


Finally, by combining Theorems 2 and 3 with Lemma 3, we obtain two corollaries on hyperbolic sets. 

\begin{cor}
Given a closed Riemannian manifold $M$ and a $C^1$-diffeomorphism 
\[
f\colon M\to M,
\]
if there is an infinite hyperbolic set $C$ for $f$ such that $f|_C\colon C\to C$ is chain transitive, then $f$ is Bohr chaotic.
\end{cor}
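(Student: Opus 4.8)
The plan is to feed the given data directly into Theorem 2, using Lemma 3 to supply the two dynamical hypotheses—expansiveness and shadowing—that are not assumed outright. First I would observe that a hyperbolic set is, by definition, a compact $f$-invariant subset of $M$; since $M$ is compact this gives that $C$ is a closed subset of $M$ with $f(C)=C$, so $C$ is of exactly the kind to which Theorem 2 applies.

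Next I would invoke Lemma 3 with $\Lambda=C$: because $C$ is a hyperbolic set for the $C^1$-diffeomorphism $f$, the lemma furnishes some $b>0$ such that $f$ is expansive and has shadowing on $B_b(C)$, which is precisely the third hypothesis of Theorem 2. The remaining two hypotheses are handed to me directly, namely that $C$ is an infinite set and that $f|_C\colon C\to C$ is chain transitive. With all three hypotheses of Theorem 2 thereby verified, I would conclude at once that $f$ is Bohr chaotic.

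I do not expect any genuine obstacle at this stage: the corollary is a bookkeeping combination of Lemma 3 and Theorem 2. The substantive content lies upstream—in Theorem 2, whose proof builds an embedded full shift by way of Corollary 1 and Lemma 2, and in Lemma 3, which is the topological form of the classical shadowing lemma. The only point requiring a moment's care is to confirm that the term \emph{hyperbolic set} already carries both closedness and invariance, so that $C$ may legitimately play the role of the set named $C$ in the statement of Theorem 2.
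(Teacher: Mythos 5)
Your proposal is correct and matches the paper's intended argument exactly: the paper derives this corollary by combining Theorem 2 with Lemma 3, just as you do, with the same identification of the hyperbolic set $C$ (compact and $f$-invariant by definition) as the closed invariant set required by Theorem 2. No gaps.
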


\begin{cor}
Given a closed Riemannian manifold $M$ and a $C^1$-diffeomorphism 
\[
f\colon M\to M,
\]
if there is an uncountable hyperbolic set $\Lambda$ for $f$, then $f$ is Bohr chaotic.
\end{cor}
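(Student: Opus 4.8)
The plan is to obtain this corollary as an immediate consequence of Theorem 3 together with Lemma 3, so that the proof reduces to checking that the hypotheses of Theorem 3 are met for the given hyperbolic set $\Lambda$. First I would record the structural facts that come for free from the definition of a hyperbolic set: by convention, a hyperbolic set is a closed $f$-invariant subset of $M$, so $\Lambda$ is closed in $M$ and satisfies $f(\Lambda)=\Lambda$. The remaining condition on $\Lambda$ required by Theorem 3, namely that it be uncountable, is precisely what is assumed.

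Next I would invoke Lemma 3 to supply the dynamical input. Since $M$ is a closed Riemannian manifold, $f$ is a $C^1$-diffeomorphism, and $\Lambda$ is a hyperbolic set for $f$, Lemma 3 produces a radius $b>0$ for which $f$ is expansive and has shadowing on the closed neighborhood $B_b(\Lambda)$. This is the substantive analytic content, but it has already been established, being the topological form of the classical shadowing lemma (\cite[Theorem 18.1.2]{KH}). With these facts in hand, all hypotheses of Theorem 3 hold: $\Lambda$ is a closed $f$-invariant uncountable subset of $M$, and $f$ is expansive and has shadowing on $B_b(\Lambda)$ for the $b>0$ produced by Lemma 3. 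Applying Theorem 3 then yields that $f$ is Bohr chaotic, which is the desired conclusion.

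I do not anticipate a genuine obstacle here, since the corollary is essentially a packaging of earlier results. The real work has already been carried out in the proof of Theorem 3 (which extracts an infinite chain component inside $\Lambda$ via Lemma 1 and then applies Theorem 2) and in the shadowing lemma underlying Lemma 3. The only point requiring care is bookkeeping with the definitions: one must read ``expansive'' in Lemma 3 and Theorem 3 in the localized sense of expansiveness on $B_b(\Lambda)$ introduced before Theorem 2, and confirm that the output of Lemma 3 matches the input of Theorem 3 verbatim, which it does.
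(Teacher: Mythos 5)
Your proposal is correct and is exactly the paper's intended argument: the paper derives this corollary by combining Lemma 3 (the topological consequence of the shadowing lemma, giving expansiveness and shadowing on $B_b(\Lambda)$) with Theorem 3, just as you do. Your remark about reading ``expansive'' in the localized sense on $B_b(\Lambda)$ so that the output of Lemma 3 matches the hypothesis of Theorem 3 is the right bookkeeping point, and nothing further is needed.
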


\appendix

\section{}

Let $f\colon X\to X$ be a homeomorphism and let $\xi=(x_i)_{i\ge0}$ be a sequence of points in $X$. For $\delta>0$, $\xi$ is said to be a {\em $\delta$-pseudo orbit} of $f$ if
\[
\sup_{i\ge0}d(f(x_i),x_{i+1})\le\delta.
\]
For $x\in X$ and $\epsilon>0$, $\xi$ is said to be {\em $\epsilon$-shadowed} by $x$ if
\[
\sup_{i\ge0}d(x_i,f^i(x))\le\epsilon.
\]

The aim of this appendix is to give a direct proof of the following theorem.

\begin{theorem}
Let $\sigma\colon\{0,1\}^\mathbb{Z}\to\{0,1\}^\mathbb{Z}$ be the shift map. For a homeomorphism $f\colon X\to X$, if there are a closed  subset $Y$ of $X$, $m\ge1$, and a homeomorphism $h\colon Y\to\{0,1\}^\mathbb{Z}$ such that $f^m(Y)=Y$ and $h\circ f^m|_Y=\sigma\circ h$, then $f$ is Bohr chaotic.
\end{theorem}

\begin{proof}
As in the proof of Theorem 2, there are $p,z,w\in Y$ such that
\begin{itemize}
\item $p\ne z$, $z\ne w$, $w\ne p$,
\item $f^m(p)=p$,
\item $\mathcal{O}_f(z)\ne\mathcal{O}_f(w)$,
\item $\lim_{L\to-\infty}f^{mL}(z)=\lim_{L\to+\infty}f^{mL}(z)=p$,
\item $\lim_{L\to-\infty}f^{mL}(w)=\lim_{L\to+\infty}f^{mL}(w)=p$.
\end{itemize}
Let
\[
E=\mathcal{O}_f(p)\cup\mathcal{O}_f(z)\cup\mathcal{O}_f(w).
\]
Since $z,w$ are isolated points in $E$, we have $B_{3\epsilon}(z)\cap E=\{z\}$ and $B_{3\epsilon}(w)\cap E=\{w\}$ for some $\epsilon>0$. Since $\sigma$ has shadowing on $\{0,1\}^\mathbb{Z}$, $f^m|_Y$ has shadowing on $Y$. We take $\eta,\eta',\delta>0$ such that
\begin{itemize}
\item $d(u,v)\le\eta$ implies $\sup_{0\le i\le m-1}d(f^i(u),f^i(v))\le\epsilon/2$ for all $u,v\in X$,
\item $\eta'\le\epsilon/2$,
\item every $\eta'$-pseudo orbit of $f^m|_Y$ is $\eta$-shadowed by some point of $Y$,
\item every $\delta$-chain $(u_i)_{i=0}^m$ of $f$ satisfies $\sup_{0\le i\le m}d(u_i,f^i(u_0))\le\eta'$.
\end{itemize}
Let $(x_i)_{i\ge0}$ be a $\delta$-pseudo orbit of $f$ with $\{x_{\alpha m}\colon\alpha\ge0\}\subset Y$. We shall show that
\[
\sup_{i\ge0}d(x_i,f^i(x))\le\epsilon
\]
for some $x\in Y$. Let $\alpha\ge0$. Since $(x_{\alpha m+\beta})_{\beta=0}^m$ is a $\delta$-chain of $f$, we have
\[
\sup_{0\le\beta\le m}d(x_{\alpha m+\beta},f^\beta(x_{\alpha m}))\le\eta';
\]
therefore,
\[
d(x_{(\alpha+1)m},f^m(x_\alpha))\le\eta'.
\]
It follows that $(x_{\alpha m})_{\alpha\ge0}$ is an $\eta'$-pseudo orbit of $f^m|_{Y}$ and so $\eta$-shadowed by some $x\in Y$. For any $\alpha\ge0$, $0\le\beta\le m-1$, we obtain
\begin{align*}
d(x_{\alpha m+\beta},f^{\alpha m+\beta}(x))&\le d(x_{\alpha m+\beta},f^\beta(x_{\alpha m}))+d(f^\beta(x_{\alpha m}),f^\beta(f^{\alpha m}(x)))\\
&\le\eta'+\epsilon/2\le\epsilon/2+\epsilon/2=\epsilon.
\end{align*}
This implies $\sup_{i\ge0}d(x_i,f^i(x))\le\epsilon$, proving the claim. If $L\ge1$ is sufficiently large, then
\[
\gamma_z=(u_i)_{i=0}^{2mL}=(p,f^{-mL+1}(z),\dots,f^{-1}(z),z,f(z),\dots,f^{mL-1}(z),p)
\]
and
\[
\gamma_w=(v_i)_{i=0}^{2mL}=(p,f^{-mL+1}(w),\dots,f^{-1}(w),w,f(w),\dots,f^{mL-1}(w),p)
\]
are $\delta$-chains of $f$. Given $s=(s_n)_{n\ge1}\in\{z,w\}^\mathbb{N}$, let
\[
\xi^s=(x_i^s)_{i\ge0}=\gamma_{s_1}\gamma_{s_2}\gamma_{s_3}\cdots,
\]
a $\delta$-pseudo orbit of $f$. Since $\{x_{\alpha m}^s\colon\alpha\ge0\}\subset Y$, we have
\[
\sup_{i\ge0}d(x_i^s,f^i(x^s))\le\epsilon
\]
for some $x^s\in Y$. Note that
\begin{itemize}
\item $x_{(2n-1)mL}^s=s_n$ for all $n\ge1$,
\item $x_i^s\in E\setminus\{z,w\}$ if $i\ne(2n-1)mL$ for all $n\ge1$.
\end{itemize}
Given any bounded sequence $(a_i)_{i\ge0}$ of real numbers with
\[
\limsup_{n\to\infty}\frac{1}{n}\sum_{i=0}^{n-1}|a_i|>0,
\]
we have
\[
\limsup_{N\to\infty}\frac{1}{N}\sum_{n=1}^N|a_{j+(2n-1)mL}|>0
\]
for some $0\le j\le 2mL-1$. Let
\begin{equation*}
s_n=
\begin{cases}
z&\text{if $a_{j+(2n-1)mL}>0$}\\
w&\text{if $a_{j+(2n-1)mL}\le0$}
\end{cases}
\end{equation*}
for all $n\ge1$. We define $\phi\in C(X)$ by
\[
\phi(q)=\frac{d(q,C_{2\epsilon}(z))}{d(q,B_\epsilon(z))+d(q,C_{2\epsilon}(z))}-\frac{d(q,C_{2\epsilon}(w))}{d(q,B_\epsilon(w))+d(q,C_{2\epsilon}(w))}
\]
for all $q\in X$. Note that
\begin{equation*}
\phi(q)=
\begin{cases}
1&\text{for all $q\in B_\epsilon(z)$}\\
-1&\text{for all $q\in B_\epsilon(w)$}\\
0&\text{for all $r\in E\setminus\{z,w\}$ and $q\in B_\epsilon(r)$}
\end{cases}
.
\end{equation*}
It follows that
\[
\sum_{i=0}^{(2N-1)mL-1}a_{j+mL+i}\phi(f^{mL+i}(x^s))=\sum_{n=1}^N|a_{j+(2n-1)mL}|.
\]
Since $f$ is a homeomorphism, we have $x^s=f^j(y^s)$ for some $y^s\in X$. We obtain
\begin{align*}
\limsup_{n\to\infty}\frac{1}{n}\sum_{i=0}^{n-1}a_i\phi(f^i(y^s))&\ge\limsup_{N\to\infty}\frac{1}{j+2NmL}\sum_{\alpha=0}^{j+2NmL-1}a_\alpha\phi(f^\alpha(y^s))\\
&=\limsup_{N\to\infty}\frac{1}{j+2NmL}\sum_{\alpha=j+mL}^{j+2NmL-1}a_\alpha\phi(f^\alpha(y^s))\\
&=\limsup_{N\to\infty}\frac{1}{j+2NmL}\sum_{i=0}^{(2N-1)mL-1}a_{j+mL+i}\phi(f^{j+mL+i}(y^s))\\
&=\limsup_{N\to\infty}\frac{1}{j+2NmL}\sum_{i=0}^{(2N-1)mL-1}a_{j+mL+i}\phi(f^{mL+i}(x^s))\\
&=\limsup_{N\to\infty}\frac{1}{j+2NmL}\sum_{n=1}^N|a_{j+(2n-1)mL}|>0.\\
\end{align*}
Since $(a_i)_{i\ge0}$ is arbitrary, we conclude that $f$ is Bohr chaotic.
\end{proof}

\section{}

Let $\mathcal{H}(X)$ denote the set of homeomorphisms of $X$. We define a metric $D\colon\mathcal{H}(X)\times\mathcal{H}(X)\to[0,\infty)$ by
\[
D(f,g)=\max\left\{\max_{x\in X}d(f(x),g(x)),\max_{x\in X}d(f^{-1}(x),g^{-1}(x))\right\}
\]
for all $f,g\in\mathcal{H}(X)$. Note that $\mathcal{H}(X)$ is a complete metric space with respect to $D$.

The aim of this appendix is to prove the following theorem.

\begin{theorem}
Let $M$ be a closed Riemannian manifold. If $\dim M\ge2$, then generic $f\in\mathcal{H}(M)$ is Bohr chaotic.
\end{theorem}

This theorem is obtained in \cite{HLT}, but we give a different proof. For a homeomorphism $f\colon X\to X$, we denote by $h_{\rm top}(f)$ the topological entropy of $f$. Our proof of Theorem B.1 is through the following lemma.

\begin{lemma}
Given a homeomorphism $f\colon X\to X$, if
\begin{itemize}
\item $X$ is totally disconnected,
\item $f$ has shadowing on $X$ and satisfies $h_{\rm top}(f)>0$,
\end{itemize}
then $f$ is Bohr chaotic.
\end{lemma}

\begin{proof}[Proof of Lemma B.1]
This proof is based on the arguments in \cite{GM}. A {\em clopen partition} of $X$ is by definition a family of disjoint clopen subsets of $X$ whose union is $X$. For a clopen partition $\mathcal{P}$ of $X$, we define ${\rm mesh}(\mathcal{P})$ by
\[
{\rm mesh}(\mathcal{P})=\max_{A\in\mathcal{P}}\max_{a,b\in A}d(a,b).
\]
For two clopen partitions $\mathcal{P},\mathcal{Q}$ of $X$, the notation $\mathcal{P}\prec\mathcal{Q}$ means that for each $B\in\mathcal{Q}$, there is $A\in\mathcal{P}$ such that $B\subset A$. Since $X$ is totally disconnected, we have a sequence $(\mathcal{P}_n)_{n\ge1}$ of clopen partitions of $X$ such that
\begin{itemize}
\item $\mathcal{P}_n\prec\mathcal{P}_{n+1}$ for all $n\ge1$,
\item $\lim_{n\to\infty}{\rm mesh}(\mathcal{P}_n)=0$.
\end{itemize}
Let $\sigma_n\colon\mathcal{P}_n^\mathbb{Z}\to\mathcal{P}_n^\mathbb{Z}$ denote the shift map for all $n\ge1$. Given $n\ge1$, let
\[
X^{(n)}=\left\{x^{(n)}=(x_i^{(n)})_{i\in\mathbb{Z}}\in\mathcal{P}_n^\mathbb{Z}\colon\text{$x_i^{(n)}\cap f^{-1}(x_{i+1}^{(n)})\ne\emptyset$ for all $i\in\mathbb{Z}$}\right\}
\]
and note that $X^{(n)}$ is a subshift of finite type. Let
\[
f_n=\sigma_n|_{X^{(n)}}\colon X^{(n)}\to X^{(n)}.
\]
For every $n\ge1$, we define a map $\pi_n\colon X^{(n+1)}\to X^{(n)}$ by: for any $x^{(n)}\in X^{(n)},x^{(n+1)}\in X^{(n+1)}$, $\pi_n(x^{(n+1)})=x^{(n)}$ if and only if $x_i^{(n+1)}\subset x_i^{(n)}$ for all $i\in\mathbb{Z}$. Note that
\[
\pi_n\circ f_{n+1}=f_n\circ\pi_n.
\]
Let
\[
Y=\left\{(x^{(n)})_{n\ge1}\in\prod_{n\ge1}X^{(n)}\colon\text{$\pi_n(x^{(n+1)})=x^{(n)}$ for all $n\ge1$}\right\}.
\]
We define a map $g\colon Y\to Y$ by
\[
g((x^{(n)})_{n\ge1})=(f_n(x^{(n)}))_{n\ge1}
\]
for all $(x^{(n)})_{n\ge1}\in Y$. We also define a map $h\colon Y\to X$ by
\[
h((x^{(n)})_{n\ge1})\in\bigcap_{n\ge1}x_0^{(n)}
\] 
for all $(x^{(n)})_{n\ge1}\in Y$. Note that $h$ is a homeomorphism and satisfies $h\circ g=f\circ h$. For any $m\ge n\ge1$, we define a map $\pi_n^m\colon X^{(m)}\to X^{(n)}$ by
\begin{equation*}
\pi_n^m=
\begin{cases}
id_{X^{(n)}}&\text{if $m=n$}\\
\pi_n\circ\pi_{n+1}\circ\cdots\circ\pi_{m-1}&\text{if $m\ge n+1$}
\end{cases}
.
\end{equation*}
Since $f$ has shadowing on $X$,
\[
\pi=(\pi_n\colon X^{(n+1)}\to X^{(n)})_{n\ge1}
\]
satisfies the Mittag--Leffler condition, i.e., for any $N\ge1$, there is $M\ge N$ such that
\[
\pi_N^M(X^{(M)})=\pi_N^m(X^{(m)})
\]
for all $m\ge M$. Given $N\ge1$, we define a map $\pi^{(N)}\colon Y\to X^{(N)}$ by $\pi^{(N)}((x^{(n)})_{n\ge1})=x^{(N)}$ for all $(x^{(n)})_{n\ge1}\in Y$. We have $\pi^{(N)}\circ g=f_N\circ\pi^{(N)}$. Let $X_N=\pi^{(N)}(Y)$ and note that $f_N(X_N)=X_N$. Since
\[
h_{\rm top}(g)=h_{\rm top}(f)>0,
\]
we have $h_{\rm top}(f_N|_{X_N})>0$ for some $N\ge1$. By the Mittag--Leffler condition, if $M\ge N$ is sufficiently large, then $X_N=\pi_N^M(X^{(M)})$. Since $X^{(M)}$ is a subshift of finite type, $X_N$ is a {\em sofic shift}. By \cite[Proposition 3]{M}, we obtain a subshift of finite type $\Sigma\subset X_N$ such that $h_{\rm top}(f_N|_{\Sigma})>0$. Since $\Sigma$ is a subshift of finite type, $f_N|_{\Sigma}$ is expansive and has shadowing on $\Sigma$. Since $h_{\rm top}(f_N|_{\Sigma})>0$, $\Sigma$ is an uncountable set. By Theorem 3, $f_N|_{\Sigma}$ is Bohr chaotic. It follows that $f_N|_{X_N}$ is Bohr chaotic and so is $g$; therefore, we conclude that $f$ is Bohr chaotic, completing the proof.
\end{proof}

Let $M$ be a closed Riemannian manifold such that $\dim M\ge2$. Let
\begin{itemize}
\item $\mathcal{H}_{zdcr}(M)=\{f\in\mathcal{H}(M)\colon\text{$CR(f)$ is totally disconnected}\}$,
\item $\mathcal{H}_{sh}(M)=\{f\in\mathcal{H}(M)\colon\text{$f$ has shadowing on $M$}\}$,
\item $\mathcal{H}_{pte}(M)=\{f\in\mathcal{H}(M)\colon\text{$h_{\rm top}(f)>0$}\}$.
\end{itemize}
By \cite{AHK,PP,Y}, we know that $\mathcal{H}_w(M)$, $w\in\{zdcr,sh,pte\}$, are residual subsets of $\mathcal{H}(M)$, thus
\[
\mathcal{H}_{zdcr}(M)\cap\mathcal{H}_{sh}(M)\cap\mathcal{H}_{pte}(M)
\]
is a residual subset of $\mathcal{H}(M)$. Given any $f\in\mathcal{H}_{zdcr}(M)\cap\mathcal{H}_{sh}(M)\cap\mathcal{H}_{pte}(M)$,\begin{itemize}
\item $CR(f)$ is totally disconnected,
\item $f|_{CR(f)}$ has shadowing on $CR(f)$,
\item $h_{\rm top}(f|_{CR(f)})=h_{\rm top}(f)>0$;
\end{itemize}
therefore, by Lemma B.1, $f|_{CR(f)}$ is Bohr chaotic and so is $f$. This completes the proof of Theorem B.1.

\end{document}